\newtheorem{theorem}{Theorem}[section]
\newtheorem{proposition}[theorem]{Proposition}
\newtheorem{corollary}[theorem]{Corollary}
\newtheorem{remark}[theorem]{Remark}
\newtheorem{definition}[theorem]{Definition}
\numberwithin{equation}{section}
\title{\textbf{Two asymptotic distributions related to R\'enyi-type continued fraction expansions}}
\author[1]{Gabriela Ileana Sebe \thanks{{igsebe@yahoo.com}}}
\affil[1]{Politehnica University of Bucharest, Faculty of Applied Sciences, Splaiul Independentei 313, 060042 Bucharest, Romania}
\affil[1]{Institute of Mathematical Statistics and Applied Mathematics, Calea 13 Sept. 13, 050711 Bucharest, Romania}
\author[2]{Dan Lascu \thanks{{lascudan@gmail.com}}}
\affil[2]{Mircea cel Batran Naval Academy, 1 Fulgerului, 900218 Constanta, Romania}
\begin{document}
\date{}
\maketitle

\noindent \textbf{Abstract}

\noindent
We attempt to investigate a two-dimensional Gauss-Kuzmin theorem for R\'enyi-type continued fraction expansions.
More precisely speaking, our focus is to obtain specific lower and upper bounds for the error term considered which imply the convergence rate of the distribution function involved to its limit.
To achieve our goal, we exploit the significant properties of the Perron-Frobenius operator of the R\'enyi-type map under its invariant measure on the Banach space of functions of bounded variation.
Finally, we give some numerical calculations to conclude the paper.
\\

\noindent \textbf{Keywords} R\'enyi continued fractions $\cdot$ Gauss-Kuzmin-problem $\cdot$ natural extension $\cdot$ Perron-Frobenius operator.
\\

\noindent \textbf{Mathematics Subject Classification} Primary 11J70 $\cdot$ 11K50; Secondary 28D05 $\cdot$ 60J20

\sloppy

\section{Introduction}

The subject of R\'enyi-type continued fractions has link with $u$-backward continued fractions studied by Gr\"ochenig and Haas \cite{Grochenig&Haas-1996}.

As is known, in 1957 R\'enyi \cite{Renyi-1957} showed that every irrational number $x \in [0, 1)$ has an infinite continued fraction expansion of the form
\begin{equation}
x = 1 - \displaystyle \frac{1}{n_1 - \displaystyle \frac{1}{n_2 - \displaystyle \frac{1}{n_3 - \ddots }}} =:[n_1, n_2, n_3, \ldots]_b, \label{1.1}
\end{equation}
where each $n_i$ is an integer greater than $1$.
The expansion in (\ref{1.1}) is called the \textit{backward continued fraction expansion} of $x$.

The underlying dynamical system is the R\'enyi map $R$ defined from $[0, 1)$ to $[0, 1)$ by
\begin{equation}
R(x):=\frac{1}{1-x}- \left\lfloor\frac{1}{1-x}\right\rfloor \label{1.2}
\end{equation}
where $\lfloor \cdot \rfloor$ stands for the integer part.
R\'enyi showed that the infinite measure ${\mathrm{d}x}/{x}$ is invariant for $R$. This map does not possess a finite absolutely continuous invariant measure and the usual trick to investigate its thermodynamic formalism does not work.

Starting from the expansion (\ref{1.1}) and the R\'enyi transformation $R$, Gr\"ochenig and Haas \cite{Grochenig&Haas-1996} define the family of maps
$T_u (x) :=  \frac{1}{u(1-x)} - \lfloor \frac{1}{u(1-x)} \rfloor$, where $u>0$, $x \in [0, 1)$.
Given $u \in (0, 4)$ and $x \in [0, 1)$, $x$ has the $u$-backward continued fraction expansion
\begin{equation}
x = 1 - \displaystyle \frac{1}{u n_1 - \displaystyle \frac{1}{n_2 - \displaystyle \frac{1}{u n_3 - \displaystyle \frac{1}{n_4 - \ddots} }}} =:[a_1, a_2, a_3, \ldots]_u, \label{1.3}
\end{equation}
where the integers $n_i=1+a_i$ are $\geq 2$ and the coefficient of $n_i$ is $1$ or $u$, depending on the parity of $i$.
In the particular case $u=1/N$, for a positive integer $N \geq 2$, they have identified a finite absolutely continuous invariant measure for $T_u$, namely $\mathrm{d}x/(x+N-1)$.
For $u=1/N$, where $N \geq 2$ is an integer, we will call $T_u$ the \textit{Re\'nyi-type continued fraction transformation} and denote it by $R_N$.

The present paper continues and completes our series of papers dedicated to Re\'nyi-type continued fraction expansions \cite{LS-2020-1,LS-2020-2,SL-2020-3}.

In \cite{LS-2020-1} we started an approach to the metrical theory of the Re\'nyi-type continued fraction expansions via dependence with complete connections. We obtained a version of the Gauss-Kuzmin theorem for these expansions by applying the theory of random systems with complete connections, due to Iosifescu \cite{IG-2009}. Briefly, we showed that the associated random systems with complete connections are with contraction and their transition operators are regular with respect to the Banach space of Lipschitz functions.

In \cite{SL-2020-3} using a Wirsing-type approach \cite{Wirsing} we obtained upper and lower bounds of the error which provide a refined estimate of the convergence rate.
For example, in case $N=100$, the upper and lower bounds of the convergence rate are respectively $\mathcal{O}\left(w^n_{100}\right)$ and $\mathcal{O}\left(v^n_{100}\right)$ as $n \rightarrow \infty$, with $v_{100} > 0.00503350150708559$ and $w_{100} < 0.00503358526129032$.
The strategy in this paper was to restrict the domain of the Perron-Frobenius operator of $R_N$ under its invariant measure $\rho_N$ to the Banach space of functions which have a continuous derivative on $(0, 1)$.

Recently, in \cite{LS-2020-2} using the method of Sz\"usz \cite{Szusz-1961}, we obtained more information on the convergence rate involved. The main novelty was the explicit expression in terms of Hurwitz zeta functions on $\eta_N$ that appears in \cite[Theorem 3.1]{LS-2020-2}.
Finally, to enable direct comparisons of the results obtained in the last two methods (Wirsing and Sz\"usz) we give upper and lower bounds of $\eta_N$ for $N=100$: $0.00505050495049505<\eta_N<0.0050753806723955975$.

The aim of this paper is to contribute a solution to two-dimensional Gauss-Kuzmin theorem for Re\'nyi-type continued fraction expansions.

The framework of this paper is arranged as follows.
In Section 2 we gather prerequisites needed to prove our results in Section 3 and 4.
In Section 3 we treat the Perron-Frobenius operator of $R_N$ under its invariant measure on the Banach space of functions of bounded variation and study the significant properties of this operator.
Section 4 is devoted to the two-dimensional Gauss-Kuzmin theorem concerning the natural extension of corresponding interval maps $R_N$, $N \geq 2$.
Here the specific lower and upper bounds for the error term considered are approached via the characteristic properties of the associated transfer operator in Section 3. Finally, we give some remarks and numerical calculations to conclude the paper.

\section{Prerequisites}

In this section we briefly present known results about R\'enyi-type continued fractions (see e.g. \cite{LS-2020-1}).

\subsection{R\'enyi-type continued fraction expansions as dynamical system}

For a fixed integer $N \geq 2$, we define the R\'enyi-type continued fraction transformation $R_N: [0,1] \rightarrow [0,1]$ by
\begin{equation}
R_{N}(x) :=
\left\{
\begin{array}{ll}
{\displaystyle \frac{N}{1-x}- \left\lfloor\frac{N}{1-x}\right\rfloor},&
{ x \in [0, 1) }\\
0,& x=1.
\end{array}
\right. \label{2.1}
\end{equation}
%

%\begin{figure}[ht]
%  \centering
%  \includegraphics[width=8cm, height=6cm,angle=0]{Figure1.png}
%  \caption{The graph of $R_3$}\label{fig1}
%\end{figure}

For any irrational $x \in [0, 1]$, $R_N$ generates a new continued fraction expansion of $x$ of the form
\begin{equation} \label{2.2}
x = 1 - \displaystyle \frac{N}{1+a_1 - \displaystyle \frac{N}{1+a_2 - \displaystyle \frac{N}{1+a_3 - \ddots}}} =:[a_1, a_2, a_3, \ldots]_R,
\end{equation}
where $a_n$'s are positive integers greater than or equal to $N$ defined by
\begin{equation} \label{2.3}
a_1:=a_1(x) = \left\lfloor \frac{N}{1-x} \right\rfloor, x \neq 1; \quad a_1(1)=\infty
\end{equation}
and
\begin{equation}
a_n := a_n(x) = a_1\left( R^{n-1}_N (x) \right), \quad n \geq 2, \label{2.4}
\end{equation}
with $R_{N}^0 (x) = x$.

The R\'enyi-type continued fraction in (\ref{2.2}) can be viewed as a measure preserving dynamical system $\left([0,1],{\mathcal B}_{[0,1]}, R_N, \rho_N \right)$,
where $\mathcal{B}_{[0,1]}$ denotes the $\sigma$-algebra of all Borel subsets of $[0,1]$,
and
\begin{equation}
\rho_N (A) :=
\frac{1}{\log \left(\frac{N}{N-1}\right)} \int_{A} \frac{\mathrm{d}x}{x+N-1}, \quad A \in {\mathcal{B}}_{[0,1]} \label{2.5}
\end{equation}
is the invariant probability measure under $R_N$ \cite {Grochenig&Haas-1996}.

The R\'enyi-type continued fraction expansion (\ref{2.2}) is convergent.
To this end, define real functions $p_n(x)$ and $q_n(x)$, for $n \in \mathbb{N}:=\{0, 1, 2, \ldots\}$, by
\begin{eqnarray}
p_0&=&1, \ p_1=1+a_1-N, \quad p_n = (1+a_n) p_{n-1} - N p_{n-2}, n \geq 2, \label{2.6} \\
q_0&=&1, \ q_1=1+a_1,  \ \ \ \ \quad \quad q_n = (1+a_n) q_{n-1} - N q_{n-2}, n \geq 2. \label{2.7}
\end{eqnarray}
It follows that $p_n(x) / q_n(x) = [a_1, a_2, \ldots, a_n ]_R$ which is called the \textit{$n$-th order convergent} of $x \in [0, 1]$.
A simple inductive argument gives
\begin{equation}
p_{n-1}q_n - p_nq_{n-1} = N^n, \quad n \in \mathbb{N_+}:=\{1, 2, \ldots\} \label{2.8}
\end{equation}
and we obtain
\begin{equation}
\left| x - \frac{p_n}{q_n} \right| \leq \frac{N^n}{q_n(q_n - q_{n-1})}, \quad n \in \mathbb{N}_+. \label{2.9}
\end{equation}

Put $\Lambda:=\{N, N+1, \ldots\}$.
An $n$-block $(a_1, a_2, \ldots, a_n)$ is said to be \textit{admissible} for the expansion in (\ref{2.2}) if there exists $x \in [0, 1)$ such that $a_i(x)=a_i$ for all $1 \leq i \leq n$.
If $(a_1, a_2, \ldots, a_n)$ is an admissible sequence, we call the set
\begin{equation}
I (a_1, a_2, \ldots, a_n) = \{x \in [0, 1]:  a_1(x) = a_1, a_2(x) = a_2, \ldots, a_n(x) = a_n \} \label{2.10}
\end{equation}
\textit{the $n$-th order cylinder}. As we mentioned above, $(a_1, a_2, \ldots, a_n) \in \Lambda^n$.
For example, for any $a_1=i \in \Lambda$ we have
\begin{equation}
I\left( a_1\right) = \left\{x \in [0, 1]: a_1(x) = a_1 \right\} = \left[ 1 - \frac{N}{i}, 1 - \frac{N}{i+1} \right). \label{2.11}
\end{equation}

\subsection{Natural extension of $R_N$}

Let $\left([0, 1],{\mathcal B}_{[0, 1]}, R_N \right)$ be as in Section 2.1.
Define $\left(u^{i}_{N}\right)_{i \geq N}$ by
\begin{equation}
u^{i}_{N}: [0, 1] \rightarrow [0, 1]; \quad
u^{i}_{N}(x) := 1 - \frac{N}{x+i}, \quad x \in [0, 1]. \label{2.12}
\end{equation}
\noindent
For each $i \geq N$, $u^{i}_{N}$ is a right inverse of $R_N$, that is,
\begin{equation} \label{2.13}
\left(R_N \circ u^{i}_{N}\right)(x) = x, \quad \mbox{for any } x \in [0, 1].
\end{equation}
Furthermore, if $a_1(x)=i$, then $\left(u^{i}_{N} \circ R_N \right)(x)=x$ where $a_1$ is as in (\ref{2.3}).
\begin{definition} \label{def.natext}
The natural extension $\left([0, 1]^2, {\mathcal B}_{[0, 1]^2},\overline{R}_N \right)$ of $\left([0, 1],{\mathcal B}_{[0, 1]}, R_N \right)$ is the transformation $\overline{R}_N$ of the square space
$\left([0, 1]^2,{\mathcal B}_{[0, 1]}^2 \right):=\left([0, 1], {\mathcal B}_{[0, 1]}\right) \times \left([0, 1], {\mathcal B}_{[0, 1]}\right)$
defined as follows \cite{Nakada-1981}:
\begin{eqnarray} \label{2.14}
&&\overline{R}_N: [0, 1]^2 \rightarrow [0, 1]^2; \nonumber \\
&&\overline{R}_N(x,y) := \left( R_N(x), \,u^{a_1(x)}_{N}(y) \right), \quad (x, y) \in [0, 1]^2.
\end{eqnarray}
\end{definition}

From (\ref{2.13}), we see that $\overline{R}_N$ is bijective on $[0, 1]^2$ with the inverse
\begin{equation} \label{2.15}
(\overline{R}_N)^{-1}(x, y)
= (u^{a_1(y)}_{N}(x), \,
R_N(y)), \quad (x, y) \in [0, 1]^2.
\end{equation}

For $\rho_N$ in (\ref{2.5}), we define its \textit{extended measure} $\overline{\rho}_N$ on $\left([0, 1]^2, {\mathcal{B}}^2_{[0, 1]}\right)$ as
\begin{equation} \label{2.16}
\overline{\rho}_N(B) :=\frac{1}{ \log \left(\frac{N}{N-1}\right) } \int\!\!\!\int_{B}
\frac{N\mathrm{d}x\mathrm{d}y}{\left( N-(1-x)(1-y) \right)^2}, \quad B \in {\mathcal{B}}^2_{[0, 1]}.
\end{equation}
Then
$\overline{\rho}_N(A \times [0, 1]) = \overline{\rho}_N([0, 1] \times A) = \rho_N(A)$ for any $A \in {\mathcal{B}}_{[0, 1]}$.
The measure $\overline{\rho}_N$ is preserved by $\overline{R}_N$, i.e.,
$\overline{\rho}_N ((\overline{R}_N)^{-1}(B))
= \overline{\rho}_N (B)$ for any $B \in {\mathcal{B}}^2_{[0, 1]}$.

\subsection{Extended random variables}

Define the projection $E:[0, 1]^2 \rightarrow [0, 1]$ by $E(x,y):=x$.
With respect to $\overline{R}_N$ in (\ref{2.14}),
define \textit{extended incomplete quotients} $\overline{a}_l(x,y)$,
$l \in \mathbb{Z}:=\{\ldots, -2, -1, 0, 1, 2, \ldots\}$ at $(x, y) \in [0, 1]^2$ by
\begin{equation} \label{2.17}
\overline{a}_{l}(x, y) := (a_1 \circ E)\left(\,(\overline{R}_N)^{l-1} (x, y) \,\right),
\quad l \in \mathbb{Z}.
\end{equation}
Remark that $\overline{a}_{l}(x, y)$ in $(\ref{2.17})$ is also well-defined for $l \leq 0$ because $\overline{R}_N$ is invertible.
By $(\ref{2.12})$ and $(\ref{2.15})$ we have
\begin{equation} \label{2.18}
\overline{a}_n(x, y) = a_n(x), \quad
\overline{a}_0(x, y) = a_1(y), \quad
\overline{a}_{-n}(x, y) = a_{n+1}(y),
\end{equation}
for any $n \in \mathbb{N}_+$ and $(x, y) \in [0, 1]^2$.

Since the measure $\overline{\rho}_N$ is preserved by $\overline{R}_N$, the doubly infinite sequence
$(\overline{a}_l(x,y))_{l \in \mathbb{Z}}$
is strictly stationary (i.e., its distribution is invariant under a shift of the indices) under $\overline{\rho}_N$.
The stochastic property of $(\overline{a}_l(x,y))_{l \in \mathbb{Z}}$ follows from the fact that
\begin{equation} \label{2.19}
\overline{\rho}_N \left( [0, x] \times [0, 1] \,|
\,\overline{a}_0, \overline{a}_{-1}, \ldots \right)
= \frac{Nx}{N - (1-x)(1-a)} \quad \overline{\rho}_N \mbox{-}\mathrm{a.s.},
\end{equation}
for any $x \in [0, 1]$, where $a:= [\overline{a}_0, \overline{a}_{-1}, \ldots]_R$ with $\overline{a}_{l}:=\overline{a}_l(x,y)$ and $(x,y) \in [0, 1]^2$.

If $I_{n}$ denote the cylinder $I(\overline{a}_0, \overline{a}_{-1}, \ldots, \overline{a}_{-n})$ for $n \in \mathbb{N}$ for $l \in {\mathbb Z}$
and
$\left( \overline{a}_{1} = i \right) = I(i) \times [0, 1]$, $i \in \Lambda$,
it follows that
\begin{equation} \label{2.20}
\overline{\rho}_N (\left.\overline{a}_1 = i\right| \overline{a}_0, \overline{a}_{-1}, \ldots) = P^{i}_{N}(a) \quad \overline{\rho}_N \mbox{-}\mathrm{a.s.}
\end{equation}
where $a = [\overline{a}_0, \overline{a}_{-1}, \ldots]_R$ and
\begin{equation}
P^{i}_{N}(x) := \frac{x+N-1}{(x+i)\,(x+i-1)}. \label{2.21}
\end{equation}
The strict stationarity of $\left(\overline{a}_l\right)_{l \in \mathbb{Z}}$, under $\overline{\rho}_N$ implies that
\begin{equation} \label{2.22}
\overline{\rho}_N(\left.\overline{a}_{l+1} = i\, \right|\, \overline{a}_l,
\overline{a}_{l-1}, \ldots)
= P^{i}_{N}(a) \quad \overline{\rho}_N \mbox{-}\mathrm{a.s.}
\end{equation}
for any $i \geq N$ and $l \in \mathbb{Z}$, where
$a = [\overline{a}_l, \overline{a}_{l-1}, \ldots]_R$.

Motivated by (\ref{2.19}), we shall consider the one-parameter family $\{\rho^{t}_{N}: t \in [0, 1]\}$
of (conditional) probability measures on $\left([0, 1], {\mathcal{B}}_{[0, 1]} \right)$
defined by their distribution functions
\begin{equation}
\rho^{t}_{N} ([0, x]) := \frac{Nx}{N - (1-x)(1-t)}, \quad x, t \in [0, 1]. \label{2.23}
\end{equation}
Note that $\rho^{1}_{N} = \lambda$.

Let $a_{n}$'s be as in (\ref{2.4}).
For any $t \in [0, 1]$ put
\begin{equation}
s^{t}_{N,0} := t,\quad
s^{t}_{N,n} := 1 - \frac{N}{a_n + s^{t}_{N,n-1}}, \quad n \in \mathbb{N}_+. \label{2.24}
\end{equation}

Note that by the very definition of $s^{t}_{N,n}$, we have
\begin{equation}
s^{t}_{N,n} = [a_n, \ldots, a_2, a_1+t-1]_R, \, n \geq 2,
\end{equation}
while $s^{t}_{N,1} = 1- N/(a_1 + t)$,
$t \in [0, 1]$.
These facts lead us to the random system with complete connections \cite{LS-2020-1}
$\{([0, 1], {\mathcal{B}}_{[0, 1]}), \Lambda, u, P\}$,
where $u:[0, 1] \times \Lambda \rightarrow [0, 1]$ is defined as
\begin{equation} \label{2.25}
u(x,i):=u_i(x)=u^{i}_{N}(x)
\end{equation}
with $u^{i}_{N}$ as in (\ref{2.12})
and
$P:[0, 1] \times \Lambda \rightarrow [0, 1]$ is defined as
\begin{equation} \label{2.26}
P(x,i):= P_i(x)= P^{i}_{N}(x)
\end{equation}
with $P^{i}_{N}$ as in (\ref{2.21}), for all $x \in [0, 1]$ and $i \in \Lambda$.

Then $\left(s^{t}_{N,n}\right)_{n \in \mathbb{N}_+}$ is an $[0, 1]$-valued Markov chain on $([0, 1], {\mathcal{B}}_{[0, 1]}, \rho^{t}_{N})$ which starts from $s_{N,0}^{t} = t$, $t \in [0, 1]$, and has the following transition mechanism:
from state $s \in [0, 1]$ the only possible transitions are those to states $1-N/(s+i)$ with the corresponding transition probability $P^{i}_{N}(s)$, $i \in \Lambda$.

Let $B([0, 1])$ denote the Banach space of all bounded $[0, 1]$-measurable complex-valued functions defined on $[0, 1]$ which is a Banach space under the supremum norm.
The transition operator of $(s^{t}_{N,n})_{n \in \mathbb{N}_+}$ takes $f \in B([0, 1])$ into the function defined by
\begin{equation} \label{2.27}
E_{\rho_N^t}\left( \left. f(s^{t}_{N,n+1})\right| s^{t}_{N,n} = s \right) = \sum_{i \in \Lambda}P^{i}_{N}(s)f\left(u^{i}_N\right) = U_N f(s)
\end{equation}
for any $s \in I$, where $E_{\rho_N^t}$ stands for the mean-value operator with respect to the probability measure $\rho^{t}_{N}$, whatever $t \in [0, 1]$, and $U_N$ is the Perron-Frobenius operator of $([0, 1],{\mathcal B}_{[0, 1]}, \rho_N, R_N)$ defined as in (\ref{3.1}).

Note that for any $t \in [0, 1]$ and $n \in \mathbb{N}_+$ we have
\begin{equation} \nonumber
\rho^{t}_{N}\left(A| a_1, \ldots, a_n\right) = \rho^{s^{t}_{N,n}}_{N}\left(R^{n}_N(A)\right),
\end{equation}
whatever the set $A$ belonging to the $\sigma$-algebra generated by the random variables $a_{n+1}, a_{n+2} \ldots$, that is,
$\sigma(a_{n+1}, a_{n+2}, \ldots) = R^{-n}_N \left(\mathcal{B}_{[0, 1]} \right)$.
In particular, it follows that the Brod\'en-Borel-L\'evy formula holds under $\rho^{t}_{N}$ for any $t \in [0, 1]$, that is,
\begin{equation}
\rho^{t}_{N} (R^n_N < x |a_1,\ldots, a_n)
= \frac{Nx}{N-(1-x)(1-s^{t}_{N,n})}, \quad x \in [0, 1], n \in \mathbb{N}_+. \label{2.28}
\end{equation}

\section{Perron-Frobenius operator of $R_N$}

We shall discuss the relevant properties of the Perron-Frobenius operator of $R_N$ under the invariant measure $\rho_N$ and related problems in terms of specified operator domain.

Let $([0, 1],{\mathcal B}_{[0, 1]},\rho_{N}, R_{N})$ be as in (\ref{2.1}) and (\ref{2.5}) and let
$L^1([0, 1],\rho_{N}):=\{f: [0, 1] \rightarrow \mathbb{C} : \int^{1}_{0} |f |\mathrm{d}\rho_{N} < \infty \}$.
The \textit{Perron-Frobenius operator} of $([0, 1],{\mathcal B}_{[0, 1]},\rho_{N}, R_{N})$ is defined as the bounded linear operator $U_N$ on the Banach space $L^1([0, 1],\rho_{N})$ such that the following holds \cite{LS-2020-1}:
\begin{equation}
U_Nf(x) = \sum_{i \geq N}P^{i}_{N}(x)\,f\left(u^{i}_N(x)\right), \quad f \in L^1([0, 1],\rho_{N}) \label{3.1}
\end{equation}
where $P_N^{i}$ and $u^{i}_N$ are as in (\ref{2.21}) and (\ref{2.12}), respectively.

For a function $f: [0, 1] \rightarrow {\mathbb C}$, define the \textit{variation} $\mathrm{var}_{A}f$ of $f$ on a subset $A$ of $[0, 1]$ by
\begin{equation}
{\rm var}_A f := \sup \sum^{k-1}_{i=1} |f(y_{i+1}) - f(y_{i})|, \label{3.2}
\end{equation}
where the supremum being taken over $y_1 < \cdots < y_k$, $y_i \in A$, $i =1, \ldots, k$ and $k \geq 2$.
We write simply $\mathrm{var} f$ for $\mathrm{var}_{[0, 1]} f$.
Let $BV([0, 1]):=\{f:[0, 1] \rightarrow {\mathbb C}: {\rm var}\,f<\infty\}$
under the norm
$
\left\| f \right\|_\mathrm{v} := \mathrm{var }f + |f|,
$
where $|f| := \sup_{x \in [0, 1]} |f(x)|$.

Next, we calculate the variation of the Perron-Frobenius operator.
\begin{proposition} \label{prop3.1}
For any $f \in BV([0, 1])$ we have
\begin{equation} \nonumber
\mathrm{var}\,U_Nf \leq \frac{1}{N} \cdot \mathrm{var } f + K_N \cdot |f|,
\end{equation}
where
\begin{equation}\label{3.3}
K_N:= \frac{2}{2N-1+2\sqrt{N(N-1)}}.
\end{equation}
\end{proposition}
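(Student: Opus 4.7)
The plan is a Lasota-Yorke style estimate applied to the branch expansion $U_Nf(x)=\sum_{i\geq N}P_N^i(x)\,f(u_N^i(x))$. First I would use subadditivity of the total variation to get $\mathrm{var}(U_Nf)\leq\sum_{i\geq N}\mathrm{var}(P_N^i\cdot(f\circ u_N^i))$, and then apply the product estimate $\mathrm{var}(gh)\leq\sup|g|\cdot\mathrm{var}\,h+\mathrm{var}\,g\cdot\sup|h|$ branch by branch. This splits the upper bound into a ``derivative'' part $\sum_i\sup P_N^i\cdot\mathrm{var}(f\circ u_N^i)$ and a ``potential'' part $\sum_i\mathrm{var}(P_N^i)\cdot|f|$, which I will identify with $\frac{1}{N}\mathrm{var}\,f$ and $K_N|f|$ respectively.

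For the derivative part I would first prove the uniform bound $\sup_{x\in[0,1]}P_N^i(x)\leq 1/N$ for every $i\geq N$; this is equivalent to $(x+i)(x+i-1)\geq N(x+N-1)$ on $[0,1]$, which follows by checking endpoints and noting that the difference is a quadratic in $x$ with positive leading coefficient whose minimum on $\mathbb{R}$ lies strictly to the left of $0$ as soon as $i\geq N$. Since $u_N^i$ is a monotone bijection of $[0,1]$ onto the cylinder $A_i=I(i)$ from (\ref{2.11}), $\mathrm{var}(f\circ u_N^i)=\mathrm{var}_{A_i}f$, and since $\{A_i\}_{i\geq N}$ is an essentially disjoint partition of $[0,1]$, $\sum_i\mathrm{var}_{A_i}f\leq\mathrm{var}\,f$. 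Multiplying through yields the first term $\frac{1}{N}\mathrm{var}\,f$.

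The real content is the identity $\sum_{i\geq N}\mathrm{var}(P_N^i)=K_N$. Differentiating $P_N^i$, one finds that the sign of the derivative on $[0,1]$ is controlled by the quadratic $q_i(x)=-x^2-2(N-1)x+i^2-(2N-1)i+(N-1)$; a short case analysis shows that $P_N^i$ is decreasing on $[0,1]$ for $N\leq i\leq 2N-2$, strictly increasing for $i\geq 2N$, and uniquely for $i=2N-1$ attains a single interior maximum at $x^{\star}=\sqrt{N(N-1)}-(N-1)$. Setting $s=\sqrt{N(N-1)}$ and using the factorization $(s+N)(s+N-1)=s(2s+2N-1)$, I would compute $P_N^{2N-1}(x^{\star})=1/(2N-1+2\sqrt{N(N-1)})=K_N/2$. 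The two monotone sums $\sum_{i=N}^{2N-2}(P_N^i(0)-P_N^i(1))$ and $\sum_{i\geq 2N}(P_N^i(1)-P_N^i(0))$ telescope (via $P_N^i(0)=(N-1)/(i(i-1))$ and $P_N^i(1)=N/(i(i+1))$) each to $1/(2(2N-1))$, while the coincidence $P_N^{2N-1}(0)=P_N^{2N-1}(1)=1/(2(2N-1))$ makes the boundary terms at $i=2N-1$ cancel the telescoped contributions exactly, leaving $2P_N^{2N-1}(x^{\star})=K_N$. The main obstacle is precisely this last identification: recognizing that only one branch is non-monotone on $[0,1]$, evaluating its maximum in closed form, and then spotting the algebraic coincidence $P_N^{2N-1}(0)=P_N^{2N-1}(1)$ that allows the telescoping and boundary contributions to cancel perfectly and produce the sharp constant $K_N$.
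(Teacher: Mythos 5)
Your proposal follows essentially the same Lasota--Yorke route as the paper: subadditivity of variation plus the product estimate branch by branch, the uniform bound $\sup_{x}P_N^i(x)\leq 1/N$, the monotonicity case analysis in $i$ (decreasing for $N\leq i\leq 2N-2$, increasing for $i\geq 2N$, interior maximum only at $i=2N-1$), telescoping of the two monotone sums via $P_N^i(0)=(N-1)/(i(i-1))$ and $P_N^i(1)=N/(i(i+1))$, and the closed-form evaluation $P_N^{2N-1}(x^\star)=K_N/2$. Your derivation of $\sup_{x}P_N^i(x)\leq 1/N$ directly from the sign of the quadratic $(x+i)(x+i-1)-N(x+N-1)$ is a mildly cleaner route to the first term than the paper's explicit comparison of the three candidate maxima, but the decomposition, the case analysis, and all the key constants coincide.
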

\begin{proof}
Recall that $P^{i}_N(x) = \displaystyle \frac{i+1-N}{x+i} - \displaystyle \frac{i-N}{x+i-1}$, $i \geq N$.
We have
\begin{equation*}
\left(P^{i}_N(x)\right)' = \frac{i-N}{(x+i-1)^2} -  \frac{i+1-N}{(x+i)^2} = \frac{L(N,x)}{(x+i-1)^2(x+i)^2}
\end{equation*}
with $L(N,x)=-x^2+2x(1-N)+i^2+i(1-2N)+N-1$, for every $i \geq N$.

If $N \leq i \leq 2N-2$, then $L(N,x)<0$ for all $x \in [0, 1]$, i.e., $\left(P^{i}_N(x)\right)'<0$, $x \in [0, 1]$.
Hence, the functions $P^i_N$ are decreasing on $[0, 1]$.

If $i=2N-1$, then $L(N,x)>0$ for all $x \in \left[0, 1-N+\sqrt{N(N-1)}\right]$,
and $L(N,x)<0$ for all $x \in \left(1-N+\sqrt{N(N-1)}, 1\right]$.
Hence $P^{2N-1}_N$ is increasing on $\left[0, 1-N+\sqrt{N(N-1)}\right]$ and decreasing on $\left(1-N+\sqrt{N(N-1)}, 1\right]$.

If $i \geq 2N$, then $L(N,x)>0$ for all $x \in [0, 1]$, i.e., $\left(P^{i}_N(x)\right)'>0$, $x \in [0, 1]$.
Hence, the functions $P^{i}_N$ are increasing on $[0, 1]$.

Hence
\begin{eqnarray*}
  \mathrm{var} \, P^{i}_N = \left\{\begin{array}{lll}
   P^{i}_N(0) - P^{i}_N(1), && N \leq i \leq 2N-2 \\
   2 P^{i}_N(1-N+\sqrt{N(N-1)}) - P^{i}_N(0) - P^{i}_N(1), && i=2N-1 \\
   P^{i}_N(1) - P^{i}_N(0), && i \geq 2N.
\end{array} \right.
\end{eqnarray*}
and
\begin{eqnarray*}
|P^{i}_N| = \sup_{x \in [0, 1]} P^{i}_N(x) = \left\{\begin{array}{lll}
                              P^{i}_N(0),  && N \leq i \leq 2N-2 \\
                              P^{i}_N (1-N+\sqrt{N(N-1)}), && i=2N-1 \\
                              P^{i}_N(1), && i \geq 2N.
\end{array} \right.
\end{eqnarray*}
Thus
\begin{eqnarray*}
\sup_{i \geq N} |P^{i}_N| &=& \max \left\{P^{N}_N(0), P^{2N-1}_N(1-N+\sqrt{N(N-1)}), P^{2N}_N(1) \right\} \\
                          &=& \max \left\{\frac{1}{N}, \frac{1}{(\sqrt{N}+\sqrt{N-1})^2}, \frac{1}{2(1+2N)}\right\} = \frac{1}{N}.
\end{eqnarray*}
Also,
\begin{eqnarray*}
\sum_{i \geq N} \mathrm{var} \, P^{i}_N &=& \sum_{N \leq i \leq 2N-2} \left( P^{i}_N(0) - P^{i}_N(1)\right) +
                                          \mathrm{var } \, P^{2N-1}_N + \sum_{i \geq 2N} \left( P^{i}_N(1) - P^{i}_N(0)\right) \\
&=& \frac{1}{2(2N-1)} + \frac{2}{2N-1+2\sqrt{N(N-1)}}-\frac{1}{2N-1}+\frac{1}{2(2N-1)}\\
&=& \frac{2}{2N-1+2\sqrt{N(N-1)}}.
\end{eqnarray*}
Therefore
\begin{eqnarray*}
\mathrm{var} \, U_Nf &=& \mathrm{var} \sum_{i \geq N} P^{i}_N \cdot (f \circ u^i_N) \leq \sum_{i \geq N} \mathrm{var} \left( P^{i}_N \cdot (f \circ u_N^i)\right) \\
&\leq& \sum_{i \geq N} |P^{i}_N| \mathrm{var} (f \circ u^i_N) +  \sum_{i \geq N} |f \circ u^i_N| \mathrm{var} \, P^{i}_N \\
&\leq& \left( \sup_{i \geq N} |P^{i}_N| \right)\sum_{i \geq N} \mathrm{var} (f \circ u^i_N) + |f| \sum_{i \geq N} \mathrm{var} \, P^{i}_N \leq \frac{1}{N} \cdot \mathrm{var } f + K_N \cdot |f|,
\end{eqnarray*}
where the constant $K_N$ is as in (\ref{3.3}) and because we took into account that
\begin{equation} \nonumber
\sum_{i \geq N} \mathrm{var} (f \circ u^i_N) = \sum_{i \geq N} \mathrm{var}_ {\left[1-\frac{N}{i}, 1-\frac{N}{i+1} \right]} \, f = \mathrm{var} \, f.
\end{equation}
\end{proof}

If $f \in B([0, 1])$, define the linear functional $U_N^{\infty}$ by
\begin{equation}
U_N^{\infty}:B([0, 1])\to {\mathbb C}; \quad U_N^{\infty} f = \int^{1}_{0}f(x)\,\rho_{N}(\mathrm{d} x). \label{3.4}
\end{equation}
Then we have
\begin{equation} \label{3.5}
U_N^{\infty} U_N^n f = U_N^{\infty} f \quad \mbox{for any } n \in \mathbb{N}_+.
\end{equation}

\begin{corollary} \label{cor.3.2}
For any $f \in BV(I)$ and for all $n \in \mathbb{N}$ we have
\begin{eqnarray}
 \mathrm{var}\,U_N^n f &\leq& \left( \frac{1}{N} + K_N \right)^n \cdot \mathrm{var } f, \label{3.6} \\
  \left| U_N^n f - U_N^{\infty} f \right| &\leq& \left( \frac{1}{N} + K_N \right)^n \cdot \mathrm{var } f. \label{3.7}
\end{eqnarray}
\end{corollary}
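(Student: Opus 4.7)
The plan is to center the function by its mean and exploit the fact that Proposition~\ref{prop3.1} has a quadratic-looking bound (one part contracts variation, the other uses the sup norm); once we subtract the mean, the sup-norm term is itself controlled by the variation.

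First I would record two preliminary facts that will drive the whole argument. (i) The constant function is fixed by $U_N$: using $P^{i}_N(x) = (x+N-1)\bigl(\frac{1}{x+i-1} - \frac{1}{x+i}\bigr)$, a telescoping sum gives $\sum_{i \geq N} P^{i}_N(x) = 1$, so $U_N 1 = 1$; hence $U_N(f - c) = U_N f - c$ for any constant $c$, and $\mathrm{var}(f-c) = \mathrm{var} f$. (ii) If $g \in BV([0,1])$ satisfies $U_N^{\infty} g = \int_0^1 g\, \mathrm{d}\rho_N = 0$, then $g$ must take essentially nonnegative and essentially nonpositive values, so $\inf g \leq 0 \leq \sup g$, and therefore $|g| = \max(\sup g, -\inf g) \leq \sup g - \inf g \leq \mathrm{var}\, g$.

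Next I would set $g_n := U_N^n f - U_N^{\infty} f$. By (\ref{3.5}) and the linearity of $U_N$ together with $U_N 1 = 1$, we have $g_{n+1} = U_N g_n$, and $U_N^{\infty} g_n = 0$ for every $n \geq 0$. Applying Proposition~\ref{prop3.1} to $g_n$ and then invoking observation (ii),
\begin{equation*}
\mathrm{var}\, g_{n+1} \;\leq\; \tfrac{1}{N}\,\mathrm{var}\, g_n \;+\; K_N\,|g_n| \;\leq\; \Bigl(\tfrac{1}{N} + K_N\Bigr)\,\mathrm{var}\, g_n .
\end{equation*}
Iterating, and using $\mathrm{var}\, g_0 = \mathrm{var}(f - U_N^{\infty} f) = \mathrm{var}\, f$, yields $\mathrm{var}\, g_n \leq (1/N + K_N)^n\,\mathrm{var}\, f$. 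Since $\mathrm{var}\, U_N^n f = \mathrm{var}\, g_n$, this is exactly (\ref{3.6}). Finally (\ref{3.7}) follows immediately from observation (ii): $|U_N^n f - U_N^{\infty} f| = |g_n| \leq \mathrm{var}\, g_n \leq (1/N + K_N)^n \,\mathrm{var}\, f$.

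The only subtle step — and the one I expect to be the main point to get right — is the bound $|g_n| \leq \mathrm{var}\, g_n$ for mean-zero BV functions. Everything else is routine linearity and induction once the centering trick is chosen. If one attempted to iterate the bound in Proposition~\ref{prop3.1} directly on $f$, the $K_N |f|$ term would not decay and one would not obtain a clean geometric contraction; centering by $U_N^{\infty} f$ is what converts the two-term inequality of Proposition~\ref{prop3.1} into a single-rate contraction with ratio $\tfrac{1}{N} + K_N$.
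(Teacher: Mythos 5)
Your argument is correct and follows essentially the same route as the paper: center by the constant $U_N^{\infty}f$ (using, as you make explicit, $U_N 1 = 1$), bound the sup-norm of the mean-zero centered function by its variation, feed this into Proposition~\ref{prop3.1} to obtain the single contraction factor $\frac{1}{N}+K_N$ for the variation, iterate to get~(\ref{3.6}), and read off~(\ref{3.7}) from the same sup-norm bound. The one place where your reasoning is slightly narrower than the paper's is observation (ii): the inference $\inf g \le 0 \le \sup g$ and hence $|g| \le \sup g - \inf g$ presupposes $g$ real-valued, whereas the paper's $BV([0,1])$ consists of complex-valued functions; the paper instead derives $|g| \le \bigl|\int_0^1 g\,\mathrm{d}\rho_N\bigr| + \mathrm{var}\,g$ directly from $\bigl|g(u) - \int_0^1 g\,\mathrm{d}\rho_N\bigr| = \bigl|\int_0^1 (g(u)-g(x))\,\mathrm{d}\rho_N(x)\bigr| \le \mathrm{var}\,g$, an estimate that requires no real-valuedness, and you would need this version (splitting into real and imaginary parts would cost a spurious constant) to cover the stated generality.
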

\begin{proof}
Note that for any $f \in BV([0, 1])$ and $u \in [0, 1]$, since $ \int^{1}_{0} \mathrm{d}\rho_N(x) = 1$, we have
\begin{eqnarray*}
|f(u)| -  \left| \int^{1}_{0} f(x) \mathrm{d}\rho_N (x) \right| &\leq& \left| f(u)- \int^{1}_{0} f(x) \mathrm{d}\rho_N(x) \right| \\
&=&  \left| \int^{1}_{0}(f(u) - f(x)) \mathrm{d}\rho_N(x)\right| \leq \mathrm{var } f,
\end{eqnarray*}
whence
\begin{equation}\label{3.8}
|f| = \sup_{u \in [0, 1]} |f(u)| \leq  \left| \int^{1}_{0} f(x) \mathrm{d}\rho_N(x) \right| + \mathrm{var } f, \quad f \in BV([0, 1]).
\end{equation}
Finally, (\ref{3.4}), (\ref{3.5}) and (\ref{3.8}) imply that
\begin{eqnarray}
\left| U_N^n f - U_N^{\infty} f \right| &\leq& \left| \int^{1}_{0} \left( U_N^n f - U_N^{\infty}f \right)(x) \mathrm{d}\rho_N(x) \right| +
\mathrm{var } \left( U_N^n f - U_N^{\infty} f \right) \nonumber \\
&\leq& \left| U_N^{\infty}U_N^n f - U_N^{\infty} f\right| + \mathrm{var } \, U_N^n f = \mathrm{var } \, U_N^n f, \label{3.9}
\end{eqnarray}
for all $n \in \mathbb{N}$ and $f \in BV([0, 1])$.

It follows from Proposition \ref{prop3.1} that for all $f \in BV([0, 1])$ we have
\begin{equation*}
\mathrm{var } \left( U_N f - U_N^{\infty} f \right) \leq \frac{1}{N} \cdot \mathrm{var } \left( f - U_N^{\infty} f \right)
+ K_N \cdot \left| f - U_N^{\infty} f \right|.
\end{equation*}
But,
\[
\mathrm{var } \left( U_N f - U_N^{\infty} f \right) = \mathrm{var } \, U_N f, \quad \mathrm{var } \left( f - U_N^{\infty} f \right) = \mathrm{var } f,
\]
and $\left| f - U_N^{\infty} f \right| \leq \mathrm{var } f$  which is (\ref{3.9}) with $n=0$.
Thus,
\begin{equation*}
\mathrm{var } \, U_N f \leq \frac{1}{N} \cdot \mathrm{var } f + K_N \cdot \mathrm{var } f = \left( \frac{1}{N} + K_N \right) \cdot \mathrm{var } f
\end{equation*}
which leads to (\ref{3.6}). Next, (\ref{3.7}) follows from (\ref{3.9}) and (\ref{3.6}).
\end{proof}

By induction with respect to $n \in \mathbb{N}_+$ we get
\begin{equation} \label{3.10}
U_N^n f(x) = \sum_{i_1, \ldots, i_n \in \Lambda} P_N^{i_1 \ldots i_n} (x) f(u_N^{i_n \ldots i_1}(x)), \quad x \in [0, 1]
\end{equation}
where
\begin{eqnarray}
  u_N^{i_n \ldots i_1}   &=& u_N^{i_n} \circ \ldots \circ u_N^{i_1} \label{3.11}\\
  P_N^{i_1 \ldots i_n} (x) &=& P_N^{i_1} (x) P_N^{i_2} (u_N^{i_1} (x)) \ldots P_N^{i_n} (u_N^{i_{n-1} \ldots i_1} (x)), \quad n \geq 2, \label{3.12}
\end{eqnarray}
and the functions $u_N^{i}$ and $P_N^i$ are defined in (\ref{2.12}) and (\ref{2.21}), respectively, for all $i \in \Lambda$.

Putting
\begin{equation} \nonumber
\frac{p_n(i_1, \ldots, i_n)}{q_n(i_1, \ldots, i_n)} = [i_1, \ldots, i_n]_R, \quad n \in \mathbb{N}_+,
\end{equation}
for arbitrary indeterminates $i_1, \ldots, i_n$, we get
\begin{eqnarray} \label{3.14}
P_N^{i_1\ldots i_n} (t) &=& \frac{(t+N-1)N^{n-1}}{(t+i_1)q_{n-1}(i_2, \ldots, i_n) -N q_{n-2}(i_3, \ldots, i_{n-1}, i_n)}  \\
                      &\times& \frac{1}{(t+i_1)q_{n-1}(i_2, \ldots, i_{n-1}, i_n-1) -N q_{n-2}(i_3, \ldots, i_{n-1}, i_n-1)}\nonumber
\end{eqnarray}
for all $i_n \in \Lambda$, $n \geq 2$, and $t \in [0, 1]$.

\section{A two-dimensional Gauss–Kuzmin theorem}

In this section we shall deliver an estimate of the error term below
\begin{equation*}
e^t_{N,n} (x,y) = \rho^t_{N} \left( R_N^n \in [0,x], s^t_{N,n} \in [0,y] \right)  -
\frac{1}{\log \left( \frac{N}{N-1} \right)} \log \frac{(x+N-1)(y+N-1)}{(N-1)\left(N-(1-x)(1-y)\right)}
\end{equation*}
for any $t \in [0, 1]$, $x, y \in [0, 1]$ and $n \in \mathbb{N}_+$.

In the main result of this section, Theorem \ref{th.4.4}, we shall derive lower and upper bounds (not depending on $t \in [0, 1]$) of the supremum
\begin{equation}\label{4.1}
\sup_{x, y \in [0, 1]} |e^t_{N,n} (x, y)|, \quad t \in [0, 1],
\end{equation}
which provide an estimate of the convergence rate involved.
First, we obtain a lower bound for the error, which suggests the convergence rate of
$\rho^t_{N} \left( s^t_{N,n} \in [0,y] \right)$ to $\rho_{N} \left([0,y] \right)$ as $n \rightarrow \infty$ for all $t \in [0, 1]$.

\begin{theorem} \label{Th.4.1}
For any $t \in [0, 1]$ and $n \in \mathbb{N}_+$ we have
\begin{equation*}
\frac{1}{2} P_N^{N(n)}(1) \leq \sup_{y \in [0, 1]} \left|\rho^t_{N} \left( s^t_{N,n} \in [0,y] \right) - \rho_{N} \left([0,y] \right) \right|
\end{equation*}
with $P_N^{N(n)}(t) = \displaystyle \sup_{s \in [0, 1]} \rho^t_{N}\left( s^t_{N,n} = s \right)$, where we write $N(n)$ for $(i_1, \ldots, i_n)$ with $i_1=\ldots=i_n=N$,
$n \in \mathbb{N}_+$.
\end{theorem}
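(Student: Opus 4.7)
\medskip
\noindent\textbf{Proof plan.} The whole argument rests on a simple contrast: under $\rho^t_N$ the random variable $s^t_{N,n}$ has a purely atomic distribution, while $\rho_N$ is absolutely continuous with respect to Lebesgue measure. A standard jump estimate then shows that whenever the CDF $F$ of $s^t_{N,n}$ has an atom of mass $P$ at a point where the continuous CDF $G$ of $\rho_N$ has no jump, $\sup_y|F(y)-G(y)|\geq P/2$. I would apply this with the atom produced by the constant block $a_1=\cdots=a_n=N$, and then strip off the dependence on $t$ by a monotonicity argument.

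To identify that atom, I would iterate the conditional Brod\'en--Borel--L\'evy formula (2.28) together with the recursion (2.24) to see that under $\rho^t_N$ the joint law of $(a_1,\ldots,a_n)$ assigns the mass $P_N^{i_1\ldots i_n}(t)$ of (3.12) to every admissible block $(i_1,\ldots,i_n)\in\Lambda^n$, while on such a block $s^t_{N,n}=[i_n,\ldots,i_2,i_1+t-1]_R$ by the definition of $s^t_{N,n}$. Iterating $R_N$ on $s^t_{N,n}$ peels off the digits $a_n,a_{n-1},\ldots,a_1$ one at a time (via (2.3) and (2.24)), so the map $(a_1,\ldots,a_n)\mapsto s^t_{N,n}$ is injective for fixed $t$, and the atom at the value $v_\ast=v_\ast(t)$ of $s^t_{N,n}$ on the all-$N$ block carries mass exactly $P_N^{N(n)}(t)$.

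Next I would execute the jump estimate. Set $F(y):=\rho^t_N(s^t_{N,n}\in[0,y])$ and $G(y):=\rho_N([0,y])=\frac{1}{\log(N/(N-1))}\log\frac{y+N-1}{N-1}$. Continuity of $G$ gives $G(v_\ast-)=G(v_\ast)$, whereas $F(v_\ast)-F(v_\ast-)=P_N^{N(n)}(t)$. Writing $a:=F(v_\ast-)-G(v_\ast-)$ and $b:=F(v_\ast)-G(v_\ast)$, the identity $b-a=P_N^{N(n)}(t)$ forces $\max(|a|,|b|)\geq\frac{1}{2}P_N^{N(n)}(t)$, and therefore $\sup_{y\in[0,1]}|F(y)-G(y)|\geq\frac{1}{2}P_N^{N(n)}(t)$.

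Finally, to remove the $t$-dependence I would prove $P_N^{N(n)}(t)\geq P_N^{N(n)}(1)$ on $[0,1]$. Since $P_N^N(x)=1/(x+N)$ is decreasing and $u_N^N(x)=1-N/(x+N)$ is increasing on $[0,1]$, the iterates $u_N^{N(k)}$ are increasing in their argument, so each factor $P_N^N(u_N^{N(k-1)}(t))$ appearing in (3.12) specialised to all-$N$ is nonincreasing in $t$, and the product of positive nonincreasing functions is nonincreasing. I expect this last monotonicity to be the most delicate point: although each ingredient is elementary, the induction on $k$ must simultaneously track the monotonicity of the orbit $u_N^{N(k)}(t)$ and of $P_N^N$ evaluated along it. Once that is in hand, combining it with the jump estimate yields the theorem.
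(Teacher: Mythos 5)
Your proof is correct and follows essentially the same route as the paper: isolate the atom of $s^t_{N,n}$ produced by the all-$N$ block, observe that a purely atomic distribution function cannot stay within less than half the jump size of a continuous one, and then strip off the $t$-dependence by showing $P_N^{N(n)}$ is nonincreasing. The only cosmetic difference is that you argue directly from the single all-$N$ atom, while the paper first bounds $\sup_y|\cdot|$ below by $\frac{1}{2}\sup_s\rho^t_N(s^t_{N,n}=s)$ and then identifies the maximiser; both give the same inequality.
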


\begin{proof}
First, the continuity of the function $y \mapsto \rho_{N} \left([0,y] \right)$, $y \in [0, 1]$, and the equations
\[
  \lim_{h \searrow 0} \rho^t_{N} \left( s^t_{N,n} \leq y-h \right) = \rho^t_{N} \left( s^t_{N,n} < y \right)
\]
and
\[
  \lim_{h \searrow 0} \rho^t_{N} \left( s^t_{N,n} < y+h \right) = \rho^t_{N} \left( s^t_{N,n} \leq y \right)
\]
imply that
\begin{equation} \nonumber
\sup_{y \in [0,1]} \left|\rho^t_{N} \left( s^t_{N,n} \leq y \right) - \rho_{N} \left([0,y] \right) \right| =
\sup_{y \in [0,1]} \left|\rho^t_{N} \left( s^t_{N,n} < y \right) - \rho_{N} \left([0,y] \right) \right|
\end{equation}
for all $t \in [0, 1]$ and $n \in \mathbb{N}$.
Second, whatever $s \in [0, 1]$ we have
\begin{eqnarray*}
  \rho^t_{N} (s^t_{N,n}=s) &=& \rho^t_{N} \left( s^t_{N,n} \leq s \right)- \rho_{N} \left([0,s] \right) - \left(\rho^t_{N} \left( s^t_{N,n} < s \right) - \rho_{N} \left([0,s] \right) \right) \nonumber \\
  &\leq& \sup_{y \in [0, 1]} \left|\rho^t_{N} \left( s^t_{N,n} \leq y \right) - \rho_{N} \left([0,y] \right) \right| + \sup_{y \in [0, 1]} \left|\rho^t_{N} \left( s^t_{N,n} < y \right) - \rho_{N} \left([0,y] \right) \right| \nonumber \\
  &=& 2 \sup_{y \in [0, 1]} \left|\rho^t_{N} \left( s^t_{N,n} \leq y \right) - \rho_{N} \left( [0,y] \right) \right|.
\end{eqnarray*}
Hence
\begin{eqnarray*}
\sup_{y \in [0, 1]} \left|\rho^t_{N} \left( s^t_{N,n} \in [0,y] \right) - \rho_{N} \left( [0,y] \right) \right| &=&
\sup_{y \in [0, 1]} \left|\rho^t_{N} \left( s^t_{N,n} \leq y \right) - \rho_{N} \left( [0,y] \right) \right| \nonumber \\
&\geq& \frac{1}{2} \sup_{s \in [0, 1]} \rho^t_{N} \left( s^t_{N,n} = s \right),
\end{eqnarray*}
for all $t \in [0, 1]$ and $n \in \mathbb{N}$.
Next, using (\ref{2.27}) we have
\[
U_N^n f(t) = E_{\rho^t_N}\left( f\left(s^t_{N,n}\right)\right), \ n \in \mathbb{N}, f \in B([0, 1]), t \in [0, 1].
\]
As $s^t_{N,n} = u_N^{a_n, \ldots, a_1}(t)$, $t \in [0, 1]$, $n \in \mathbb{N}_+$, we have
\begin{equation} \label{3.13}
U_N^n f(t) = \sum_{i^{(n)} \in \Lambda^n} \rho^t_{N} \left( (a_1, a_2, \ldots, a_n) = i^{(n)} \right) f\left( u_N^{i_n \ldots i_1} (t)\right)
\end{equation}
for any $n \in \mathbb{N}_+$, $f \in B([0, 1])$, $t \in [0, 1]$ and $i^{(n)} = (i_1, \ldots, i_n) \in \Lambda^n$.
Hence, by (\ref{2.10}), (\ref{3.10}) and (\ref{3.13}) we get
\begin{equation} \nonumber
P_N^{i_1\ldots i_n}(t) = \rho^t_{N} \left( I_N \left( i^{(n)}\right) \right) = \rho^t_{N} \left( s^t_{N,n} = [i_n, \ldots, i_2, i_1+t-1]_R \right), \ n \geq 2,
\end{equation}
\begin{equation} \nonumber
P_N^{i_1}(t) = \rho^t_{N} \left( I_N \left( i_{1}\right) \right) = \rho^t_{N} \left( s^t_{N,1} = 1- \frac{N}{i_1 +t} \right),
\end{equation}
for all $t \in [0, 1]$ and $i_1,\ldots, i_n \in \Lambda$.

Since as easily seen,
\[
\max_{ i^{(n)} \in \Lambda^n } \rho^t_{N} \left( I_N \left( i^{(n)}\right) \right) = \rho^t_{N} \left( I_N \left( N(n)\right) \right),
\]
where we write $N(n)$ for $i^{(n)} = (i_1, \ldots, i_n)$ with $i_1 = \ldots = i_n=N$, $n \in \mathbb{N}_+$.

Also by (\ref{3.14}) we have
\begin{eqnarray*}
P_N^{N(n)} (t) &=& \frac{(t+N-1)N^{n-1}}{(t+N)q_{n-1}(\underbrace{N, \ldots, N}_{(n-1) \ times})-N q_{n-2}(\underbrace{N, \ldots, N}_{(n-2) \ times})} \nonumber \\
                      &\times& \frac{1}{(t+N)q_{n-1}(\underbrace{N, \ldots, N, N}_{n-2 \ times},N-1) - N q_{n-2}(\underbrace{N, \ldots, N}_{n-3 \ times}, N-1)}.
\end{eqnarray*}
It is easy to see that $P_N^{N(n)} (\cdot)$ is a decreasing function. Therefore
\begin{equation} \nonumber
\sup_{s \in [0, 1]} \rho^t_{N} \left( s^t_{N,n} = s \right) = P_N^{N(n)}(t) \geq P_N^{N(n)}(1)
\end{equation}
for all $t \in [0, 1]$.

\end{proof}

\begin{theorem} \label{th.4.2}
(The lower bound)
For any $t \in [0, 1]$ we have
\begin{equation*}
\begin{split}
\frac{1}{2} P_N^{N(n)}(1) \leq &\sup_{x,y \in [0, 1]} \left|\rho^t_{N} \left( R^n_N \in [0,x], s^t_{N,n} \in [0,y] \right) \right.\\
&\left. - \frac{1}{\log \left( \frac{N}{N-1} \right)} \log \frac{(x+N-1)(y+N-1)}{(N-1)\left(N-(1-x)(1-y)\right)} \right|
\end{split}
\end{equation*}
for all $n \in \mathbb{N}_+$.
\end{theorem}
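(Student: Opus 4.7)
The plan is to reduce this two-dimensional statement to the one-dimensional bound already proved in Theorem \ref{Th.4.1}, by specializing the supremum to the boundary point $x=1$.

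First, I would observe that
\begin{equation*}
\sup_{x,y \in [0,1]} |e^t_{N,n}(x,y)| \geq \sup_{y \in [0,1]} |e^t_{N,n}(1,y)|,
\end{equation*}
so it suffices to compute $e^t_{N,n}(1,y)$. Since $R^n_N \in [0,1]$ is certain, the joint probability collapses:
\begin{equation*}
\rho^t_N\!\left(R^n_N \in [0,1],\, s^t_{N,n} \in [0,y]\right) = \rho^t_N\!\left(s^t_{N,n} \in [0,y]\right).
\end{equation*}

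Next I would simplify the logarithmic term at $x=1$. With $x=1$ the factor $(1-x)(1-y)$ vanishes and $x+N-1 = N$, giving
\begin{equation*}
\frac{1}{\log\!\left(\tfrac{N}{N-1}\right)} \log \frac{N(y+N-1)}{(N-1)\, N} = \frac{1}{\log\!\left(\tfrac{N}{N-1}\right)} \log \frac{y+N-1}{N-1},
\end{equation*}
and by the definition (\ref{2.5}) of $\rho_N$ this is precisely $\rho_N([0,y])$. Therefore
\begin{equation*}
e^t_{N,n}(1,y) = \rho^t_N\!\left(s^t_{N,n} \in [0,y]\right) - \rho_N([0,y]).
\end{equation*}

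Combining the two observations,
\begin{equation*}
\sup_{x,y \in [0,1]} |e^t_{N,n}(x,y)| \geq \sup_{y \in [0,1]} \left|\rho^t_N\!\left(s^t_{N,n} \in [0,y]\right) - \rho_N([0,y])\right|,
\end{equation*}
and Theorem \ref{Th.4.1} bounds the right-hand side below by $\tfrac12 P_N^{N(n)}(1)$, completing the proof.

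There is no real obstacle here: the argument is essentially a marginalization, and all the substantive work (identification of the extremal cylinder $N(n)$, monotonicity of $P_N^{N(n)}$, and the halving trick relating $\sup_s \rho^t_N(s^t_{N,n}=s)$ to the Kolmogorov-type deviation) has already been carried out in Theorem \ref{Th.4.1}. The only point worth double-checking is the algebraic simplification of the conjectured limit density at $x=1$, which must reproduce $\rho_N([0,y])$ exactly; this is forced by the marginal identity $\overline{\rho}_N([0,1]\times A) = \rho_N(A)$ noted after (\ref{2.16}).
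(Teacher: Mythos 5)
Your proposal is correct and matches the paper's proof exactly: the paper also specializes to $x=1$, observes that the joint event degenerates to the marginal one, verifies that the limit expression collapses to $\rho_N([0,y])$, and then invokes Theorem \ref{Th.4.1}. Nothing further to add.
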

\begin{proof}
Whatever $t \in [0, 1]$ and $n \in \mathbb{N}_+$, by Theorem \ref{Th.4.1} we have
\begin{eqnarray*}
&&\displaystyle \sup_{x,y \in [0, 1]} \left|\rho^t_{N} \left( R^n_N \in [0,x], s^t_{N,n} \in [0,y] \right) -
 \frac{1}{\log \left( \frac{N}{N-1} \right)} \log \frac{(x+N-1)(y+N-1)}{(N-1)\left(N-(1-x)(1-y)\right)} \right| \nonumber \\
&&\geq \sup_{y \in [0, 1]} \left|\rho^t_{N} \left( R^n_N \in [0, 1], s^t_{N,n} \in [0,y] \right) -
\frac{1}{\log\left(\frac{N}{N-1}\right)} \log\left( \frac{y+N-1}{N-1} \right) \right| \nonumber \\
&&= \sup_{y \in [0, 1]} \left|\rho^t_{N} \left( s^t_{N,n} \in [0,y] \right) - \rho_{N} \left( [0,y] \right) \right|
\geq \frac{1}{2} P_N^{N(n)}(1).
\end{eqnarray*}
\end{proof}

\begin{remark}
Since $q_n( \underbrace{N, \ldots, N}_{(n-1) \ times}, N-1 ) = N^n$ we get
\begin{equation} \nonumber
P_N^{N(n)}(1) = \frac{1}{q_n(N(n))}, \ n \in \mathbb{N}_+.
\end{equation}
By the recurrence relation (\ref{2.7}) with $a_n=i_n$ for all $n \in \mathbb{N}$, we obtain
\[
q_n(N(n))=\frac{N^{n+1}-1}{N-1}.
\]
It should be noted that Theorem \ref{th.4.2} in connection with the limit
\begin{equation} \nonumber
\lim_{n \rightarrow \infty} \left( \frac{1}{2} P_N^{N(n)}(1) \right)^{1/n} =
\lim_{n \rightarrow \infty} \left( \frac{N-1}{2\left(N^{n+1}-1\right)} \right)^{\frac{1}{n}} = \frac{1}{N}
\end{equation}
leads to an estimate of the order of magnitude of the error term $e^t_{N,n} (x,y)$.
\end{remark}

In what follows we exploit the characteristic properties of the transition operator associated with the random system with complete connections underlying R\'enyi-type continued fraction.
By restricting this operator to the Banach space of functions of bounded variation on $[0, 1]$, we derive an explicit upper bound for the supremum (\ref{4.1}).
\begin{theorem} \label{th.4.3}
(The upper bound)
For any $t \in [0, 1]$ we have
\begin{equation} \nonumber
\begin{split}
    \sup_{x, y \in [0, 1]} &\left|\rho^t_{N} \left( R^n_N \in [0,x], s^t_{N,n} \in [0,y] \right) -
\frac{1}{\log \left( \frac{N}{N-1} \right)} \log \frac{(x+N-1)(y+N-1)}{(N-1)\left(N-(1-x)(1-y)\right)} \right| \\
   & \leq \left(\frac{1}{N} + K_N \right)^n.
\end{split}
\end{equation}
for all $n \in \mathbb{N}$, where $K_N$ is as in (\ref{3.3}).
\end{theorem}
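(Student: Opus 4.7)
The overall plan is to express the joint cumulative distribution on the left-hand side as an iterate $U_N^n$ applied to a bounded-variation test function, and then exploit Corollary~\ref{cor.3.2} to obtain the geometric rate $\left(\frac{1}{N}+K_N\right)^n$ essentially for free. The second (deterministic) term in $e^t_{N,n}(x,y)$ will turn out to be exactly $U_N^{\infty}$ of the same test function, so the whole error reduces to controlling $\mathrm{var}\,f_{x,y}$.

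First I would apply the Brod\'en--Borel--L\'evy formula (\ref{2.28}). Since $s^t_{N,n}$ is $\sigma(a_1,\ldots,a_n)$-measurable, conditioning on $(a_1,\ldots,a_n)$ produces
\begin{equation*}
\rho^t_N\!\left(R^n_N\in[0,x],\,s^t_{N,n}\in[0,y]\right) = E_{\rho^t_N}\!\left[\mathbf{1}_{[0,y]}(s^t_{N,n})\cdot\frac{Nx}{N-(1-x)(1-s^t_{N,n})}\right].
\end{equation*}
Setting $f_{x,y}(s):=\mathbf{1}_{[0,y]}(s)\,\dfrac{Nx}{N-(1-x)(1-s)}$ and invoking $E_{\rho^t_N}[g(s^t_{N,n})]=U_N^n g(t)$ from (\ref{2.27}), the joint probability becomes $U_N^n f_{x,y}(t)$. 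Next I would verify by direct integration that $U_N^{\infty}f_{x,y}$ equals the target logarithmic expression: the kernel $\dfrac{1}{(s+N-1)\bigl(N-(1-x)(1-s)\bigr)}$ splits into simple fractions with poles at $s=-(N-1)$ and $s=-(N-1+x)/(1-x)$, and integrating from $0$ to $y$, together with the identity $(1-x)y+N-1+x=N-(1-x)(1-y)$, collapses the primitive to $\log\dfrac{(x+N-1)(y+N-1)}{(N-1)\bigl(N-(1-x)(1-y)\bigr)}$; dividing by $\log(N/(N-1))$ matches the deterministic term of $e^t_{N,n}(x,y)$ exactly.

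Finally I would bound $\mathrm{var}\,f_{x,y}\le 1$ uniformly in $x,y\in[0,1]$. On the interval $[0,y]$ the smooth factor $g(s):=Nx/\bigl(N-(1-x)(1-s)\bigr)$ has derivative $-Nx(1-x)/\bigl(N-(1-x)(1-s)\bigr)^2 \le 0$, so $g$ is decreasing and its variation on $[0,y]$ equals $g(0)-g(y)$; the downward jump of height $g(y)$ at $s=y$ contributes another $g(y)$, giving $\mathrm{var}\,f_{x,y}=g(0)=Nx/(N-1+x)\le 1$ (attained at $x=1$). Applying (\ref{3.7}) of Corollary~\ref{cor.3.2} to $f_{x,y}$ then yields
\begin{equation*}
\left|U_N^n f_{x,y}(t)-U_N^{\infty}f_{x,y}\right|\le\left(\frac{1}{N}+K_N\right)^n\mathrm{var}\,f_{x,y}\le\left(\frac{1}{N}+K_N\right)^n
\end{equation*}
uniformly in $t,x,y\in[0,1]$, which is precisely the claim. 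The only mildly non-mechanical step is the partial-fraction identification of $U_N^{\infty}f_{x,y}$ with the logarithmic term; both the conditioning reduction and the variation estimate are routine, so I do not anticipate a genuine obstacle.
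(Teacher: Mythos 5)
Your proof is correct, and it takes a genuinely different route from the paper's. The paper works with the one-dimensional indicator $f_y=\mathbf{1}_{[0,y]}$, shows $|H^t_{N,n}(y)|=|U_N^nf_y(t)-U_N^{\infty}f_y|\le(1/N+K_N)^n$, then conditions on $(a_1,\ldots,a_n)$ to write the joint probability as a Stieltjes integral $\int_0^y\frac{Nx}{N-(1-x)(1-z)}\,\mathrm{d}F^t_{N,n}(z)$, splits $\mathrm{d}F^t_{N,n}=\mathrm{d}\rho_N+\mathrm{d}H^t_{N,n}$, and handles the error piece by integration by parts, arriving at the factor $\frac{Nx}{N-1+x}\le 1$. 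You instead absorb the whole two-dimensional structure into a single BV test function $f_{x,y}(s)=\mathbf{1}_{[0,y]}(s)\cdot\frac{Nx}{N-(1-x)(1-s)}$, identify $U_N^nf_{x,y}(t)$ with the joint probability via the Brod\'en--Borel--L\'evy formula (\ref{2.28}) and $U_N^{\infty}f_{x,y}$ with the logarithmic limit by partial fractions, and then invoke Corollary~\ref{cor.3.2} exactly once. Your computation of $\mathrm{var}\,f_{x,y}=g(0)-g(y)+g(y)=\frac{Nx}{N-1+x}$ recovers precisely the same factor the paper obtains after integration by parts, which is no accident: the two arguments are algebraically equivalent, but yours collapses the conditioning, the decomposition, and the integration by parts into one application of the spectral-gap estimate. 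This makes your version shorter and arguably more transparent, at the modest cost of the partial-fraction verification that $U_N^{\infty}f_{x,y}$ matches the target logarithm (which you carried out correctly: with $N-(1-x)(1-s)=(1-x)s+N-1+x$ the integrand splits as $\frac{1}{s+N-1}-\frac{1-x}{(1-x)s+N-1+x}$, and $(1-x)y+N-1+x=N-(1-x)(1-y)$ closes the computation). No gaps.
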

\begin{proof}
Let $F^t_{N,n}(y) = \rho^t_{N} (s^t_{N,n} \leq y)$ and $H^t_{N,n} (y) = F^t_{N,n}(y) - \rho_N ([0,y])$, $t, y \in [0, 1]$, $n \in \mathbb{N}$.
Note that $H^t_{N,n}(0) = 0$.
As we have noted $U_N$ is the transition operator of the Markov chain $(s^t_{N,n})_{n \in \mathbb{N}}$.
For any $y \in [0, 1]$ consider the function $f_y$ defined on $[0, 1]$ as
\begin{equation} \nonumber
f_{y}(t):=
\left\{
\begin{array}{ll}
{1} & { \mbox{if } \, 0 \leq t \leq y,}\\
{0} & \mbox{if } \, y < t \leq 1.
\end{array}
\right.
\end{equation}
Hence
\begin{equation} \nonumber
U_N^n f_y (t) = E_{\rho^t_N}\left( \left. f_y(s^t_{N,n})\right| s^t_{N,0} = t \right) = \rho^t_{N} (s^t_{N,n} \leq y)
\end{equation}
for all $t, y \in [0, 1]$, $n \in \mathbb{N}$.
As
\begin{equation} \nonumber
U_N^{\infty} f_y = \int^1_{0} f_y(t) \mathrm{d}\rho_N (t) = \rho_N ([0,y]), \quad y \in [0, 1].
\end{equation}
It follows from Corollary \ref{cor.3.2} that
\begin{eqnarray} \label{4.3}
  |H^t_{N,n}(y)| &=& \left| \rho^t_{N} (s^t_{N,n} \leq y) - \rho_N ([0,y]) \right| = \left| U_N^n f_y (t) - U_N^{\infty} f_y \right| \nonumber \\
                  &&\leq \left(\frac{1}{N} +K_N \right)^n \mathrm{var } \, f_y = \left(\frac{1}{N} + K_N \right)^n
\end{eqnarray}
for all $t, y \in [0, 1]$, $n \in \mathbb{N}$.
By the very definition of the conditional probability and (\ref{2.28}), for all $t \in [0, 1]$, $x, y \in [0, 1]$ and $n \in \mathbb{N}$ we have
\begin{equation*}
\begin{split}
    &\rho^t_{N} \left( R^n_N \in [0,x], s^t_{N,n} \in [0,y] \right) = \rho^t_{N} \left( \left.R^n_N \in [0,x] \, \right| \, s^t_{N,n} \in [0, y] \right) \cdot \rho^t_{N} (s^t_{N,N} \in [0,y]) \\
     & = \rho^t_{N} \left( \left.R^n_N \in [0,x] \, \right| \, s^t_{N,n} \in [0, y] \right) \cdot F^t_{N,n}(y)
     =  \int^{y}_{0} \rho^t_{N} \left( \left.R^n_N \in [0,x] \, \right| \, s^t_{N,n} = z \right) \mathrm{d}F^t_{N,n}(z) \\
   & = \int^{y}_{0} \frac{Nx}{N-(1-x)(1-z)} \mathrm{d}F^t_{N,n}(z) \\
   &= \int^{y}_{0} \frac{Nx}{N-(1-x)(1-z)} \mathrm{d}\rho_{N}(z) + \int^{y}_{0} \frac{Nx}{N-(1-x)(1-z)} \mathrm{d}H^t_{N,n}(z) \\
   & = \frac{1}{\log \left( \frac{N}{N-1} \right)} \log \frac{(x+N-1)(y+N-1)}{(N-1)\left(N-(1-x)(1-y)\right)} + \frac{Nx}{N-(1-x)(1-z)} \left.H^t_{N,n}(z)\right|^{y}_{0} \\
   & + \int^{y}_{0} \frac{Nx(1-x)}{(N-(1-x)(1-z))^2} H^t_{N,n}(z)\mathrm{dz}.
\end{split}
\end{equation*}
Hence, by (\ref{4.3})
\begin{eqnarray*}
&&\left|\rho^t_{N} \left( R^n_N \in [0,x], s^t_{N,n} \in [0,y] \right) -
\frac{1}{\log \left( \frac{N}{N-1} \right)}\log \frac{(x+N-1)(y+N-1)}{(N-1)\left(N-(1-x)(1-y)\right)} \right| \\
&& \leq \left(\frac{1}{N} + K_N \right)^n \left( \frac{Nx}{N-(1-x)(1-y)} - \left.\frac{Nx}{N-(1-x)(1-z)}\right|_{z=0}^{z=y} \right) \\
&& = \left(\frac{1}{N} + K_N \right)^n  \frac{Nx}{N-1+x} \leq \left(\frac{1}{N} + K_N \right)^n
\end{eqnarray*}
where $K_N$ is as in (\ref{3.3}), $t,x,y \in [0, 1]$, $n \in \mathbb{N}$.
\end{proof}

Combining Theorem \ref{th.4.2} with Theorem \ref{th.4.3} we obtain Theorem \ref{th.4.4}.
\begin{theorem} \label{th.4.4}
Whatever $t \in [0, 1]$ we have
\begin{equation*}
\begin{split}
 &\frac{1}{2} P_N^{N(n)}(1) \\
 &\leq \sup_{x,y \in [0, 1]} \left|\rho^t_{N} \left( R^n_N \in [0,x], s^t_{N,n} \in [0,y] \right) - \frac{1}{\log \left( \frac{N}{N-1} \right)}\log \frac{(x+N-1)(y+N-1)}{(N-1)\left(N-(1-x)(1-y)\right)} \right| \\
 & \leq \left(\frac{1}{N} + K_N \right)^n
\end{split}
\end{equation*}
for all $n \in \mathbb{N}_+$.
\end{theorem}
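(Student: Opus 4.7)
The statement is essentially a packaging result: the two inequalities have already been established independently in Theorem \ref{th.4.2} and Theorem \ref{th.4.3}, so the plan is simply to assemble them and confirm that the suprema on which they are stated are identical.

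First, I would note that both bounds share the same central object, namely
\[
\Delta^t_{N,n}(x,y) := \rho^t_{N}\!\left( R^n_N \in [0,x],\, s^t_{N,n} \in [0,y] \right) - \frac{1}{\log\!\left(\tfrac{N}{N-1}\right)}\log\frac{(x+N-1)(y+N-1)}{(N-1)\bigl(N-(1-x)(1-y)\bigr)},
\]
and that the supremum in both statements is taken over the same rectangle $(x,y) \in [0,1]^2$ and the same parameters $t \in [0,1]$, $n \in \mathbb{N}_+$. With that observation the proof reduces to two citations.

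For the lower bound, I would invoke Theorem \ref{th.4.2} directly, which already gives
$\frac{1}{2} P_N^{N(n)}(1) \leq \sup_{x,y \in [0,1]} |\Delta^t_{N,n}(x,y)|$
for every $t \in [0,1]$ and $n \in \mathbb{N}_+$. For the upper bound, I would invoke Theorem \ref{th.4.3}, which supplies
$\sup_{x,y \in [0,1]} |\Delta^t_{N,n}(x,y)| \leq (1/N + K_N)^n$
with $K_N$ as in (\ref{3.3}). Chaining these two inequalities yields precisely the double estimate claimed in Theorem \ref{th.4.4}.

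There is no genuine obstacle here; the only thing worth checking is bookkeeping — that the ranges of $t$, $x$, $y$, and $n$ match across the three statements, and that no hidden constants have been silently changed. Since Theorem \ref{th.4.2} is proved for $n \in \mathbb{N}_+$ and Theorem \ref{th.4.3} is stated for $n \in \mathbb{N}$, the joint estimate is valid on $\mathbb{N}_+$, which is indeed the range appearing in Theorem \ref{th.4.4}. So the proof is a one-line concatenation, and I would present it as such, perhaps with a brief sentence reminding the reader that the lower bound comes from evaluating at $x=1$ and reducing to the marginal rate supplied by Theorem \ref{Th.4.1}, while the upper bound comes from the BV contraction of $U_N$ established in Proposition \ref{prop3.1} and propagated through Corollary \ref{cor.3.2}.
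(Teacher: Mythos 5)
Your proposal matches the paper's proof exactly: the paper states only ``Combining Theorem \ref{th.4.2} with Theorem \ref{th.4.3} we obtain Theorem \ref{th.4.4},'' which is precisely the one-line concatenation you describe. Your additional bookkeeping remarks about the ranges of $t$, $x$, $y$, $n$ and the provenance of each bound are accurate and harmless.
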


Actually, Theorem \ref{th.4.4} implies that the convergence rate is $\mathcal{O}(\alpha^n)$, with
\begin{equation*}
\frac{1}{N} \leq \alpha \leq \frac{1}{N} +\frac{2}{2N-1+2\sqrt{N(N-1)}}.
\end{equation*}

For example, we have
\\
\begin{center}
\begin{tabular}{|l|l|}
  \hline
  $N=2$ & $0.5 \leq \alpha \leq 0.843145\ldots$\\\hline
  $N=3$ & $0.33333\ldots \leq \alpha \leq 0.535374\ldots$\\\hline
  $N=5$ & $0.2 \leq \alpha \leq 0.311456\ldots$\\\hline
  $N=10$ & $0.1 \leq \alpha \leq 0.152668\ldots$\\\hline
  $N=100$ & $0.01 \leq \alpha \leq 0.0150252\ldots$\\\hline
  $N=1000$ & $0.001 \leq \alpha \leq 0.00150025\ldots$\\\hline
  $N=10000$ & $0.0001 \leq \alpha \leq 0.000150003\ldots$\\\hline
  \hline
\end{tabular}
\end{center}

%Also, the graph bellow suggests that for very large values of $N$ the lower and upper bounds are very close.

%\begin{figure}[ht]
%  \centering
%  \includegraphics[width=9cm, height=7cm,angle=0]{Figure2.png}
%  \caption{Graphs of lower and upper bounds}\label{fig2}
%\end{figure}

%The graphs for the lower and upper bounds were obtained from the functions
%\[
%f_{lb} : \mathbb{N}_+ \rightarrow \mathbb{R}, \mbox{ } f_{lb}(N)= \frac{1}{N}
%\]
%and, respectively
%\[
%f_{ub}: \mathbb{N}_+ \rightarrow \mathbb{R}, \mbox{ } f_{ub}(N)= \frac{1}{N} +\frac{2}{2N-1+2\sqrt{N(N-1)}}.
%\]

%

\end{document}